\documentclass[letterpaper, 10 pt, conference]{ieeeconf}  

\IEEEoverridecommandlockouts                              
\usepackage{graphics} 
\usepackage{epsfig} 
\usepackage{mathptmx} 
\usepackage{times} 
\usepackage{amsmath} 
\usepackage{amssymb}  
\usepackage{xcolor}
\usepackage{comment}
\usepackage{empheq}

\usepackage{amsthm}
\usepackage{microtype}

\usepackage{tikz}
\usepackage{pgfplots}
\pgfplotsset{compat=1.18}
\usetikzlibrary{backgrounds}
\usetikzlibrary{intersections}
\usepgfplotslibrary{fillbetween}
\usepackage{import}

\usepackage{printlen}
\uselengthunit{mm}

\usepackage[%
  backend=biber,
  url=true,
  style=ieee, 
  sorting=none, 
  maxnames=4,
  minnames=3,
  maxbibnames=99,
  giveninits,
  uniquename=init]{biblatex} 

\newtheorem{definition}{Definition}
\newtheorem{theorem}{Theorem}

\title{\LARGE \bf
Implicit Primal-Dual Interior-Point Methods\\for Quadratic Programming}

\author{Jon Arrizabalaga and Zachary Manchester
\thanks{Jon Arrizabalaga and Zachary Manchester are with the Department of Aeronautics and Astronautics, Massachusetts Institute of Technology, USA
        {\tt\small jonarri@mit.edu, zacm@mit.edu }}%
}

\newcommand{\inred}[1]{{\color{red}#1}}
\newcommand{\normal}{} 

\bibliography{references}

\begin{document}

\maketitle
\thispagestyle{empty}
\pagestyle{empty}

\begin{abstract}
This paper introduces a new method for solving quadratic programs using primal-dual interior-point methods. Instead of handling complementarity as an explicit equation in the Karush-Kuhn-Tucker (KKT) conditions, we ensure that complementarity is implicitly satisfied by construction. This is achieved by introducing an auxiliary variable and relating it to the duals and slacks via a retraction map. Specifically, we prove that the softplus function has favorable numerical properties compared to the commonly used exponential map. The resulting KKT system is guaranteed to be spectrally bounded, thereby eliminating the most pressing limitation of primal-dual methods: ill-conditioning near the solution. These attributes facilitate the solution of the underlying linear system, either by removing the need to compute factorizations at every iteration, enabling factorization-free approaches like indirect solvers, or allowing the solver to achieve high accuracy in low-precision arithmetic. Consequently, this novel perspective opens new opportunities for interior-point methods, especially for solving large-scale problems to high precision.
\end{abstract}

\section{Introduction}
Primal-dual interior-point methods (PDIPMs) have long served as the workhorse of convex optimization, prized for their rapid convergence and robust theoretical guarantees~\cite{wright1997primal}. However, as iterates approach the optimal solution, the underlying Karush-Kuhn-Tucker (KKT) system inevitably becomes ill-conditioned~\cite{wright1998ill}. This forces expensive, exact matrix factorizations at every iteration, rendering traditional PDIPMs prohibitively costly for large-scale problems~\cite{nocedal2006numerical}.

To overcome this computational bottleneck, researchers have broadly pursued two distinct trajectories. The first accelerates PDIPMs using inexact interior-point methods, replacing exact matrix factorizations with iterative linear solvers. While this reduces per-iteration costs, iterative solvers are highly sensitive to the ill-conditioning of standard KKT systems, necessitating complex, expensive preconditioners to prevent stagnation and numerical instability during final iterations.

Alternatively, the second trajectory abandons the interior-point framework entirely in favor of first-order methods, most notably the Alternating Direction Method of Multipliers (ADMM)~\cite{neal2011distributed} and Primal-Dual Hybrid Gradient (PDHG) algorithms~\cite{chambolle2011first}. These approaches are highly scalable and boast remarkably cheap iterations. However, they suffer from notoriously poor tail convergence, and thus, achieving the high precision characteristic of PDIPMs requires an impractical number of iterations.

Given this dichotomy—where interior-point methods converge to high accuracy in a few expensive iterations, and first-order methods require many cheap iterations but yield low accuracy—a very reasonable question arises: \emph{Is it possible to develop a method that achieves the low iteration cost of first-order approaches while maintaining the rapid convergence and high precision of interior-point methods?}

\begin{figure}[t]
	\centering
    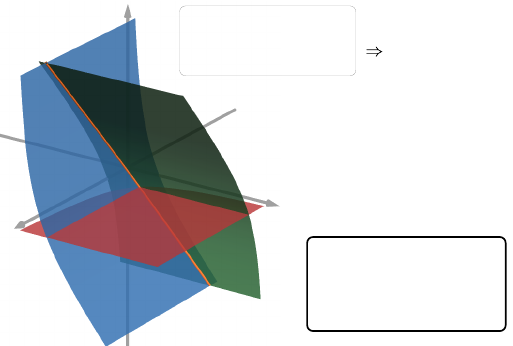
	\caption{We present an implicit method for primal-dual interior-point optimization. Unlike the standard approach that enforces complementarity as an explicit algebraic constraint (red), our method satisfies it by construction (orange) using an auxiliary variable $v$ and a softplus retraction map $b_\mu(v)$ (green, blue). This structural shift guarantees a spectrally bounded KKT system, even in the vicinity of a solution. By eliminating the eigenvalue divergence of traditional methods, our approach enables reusing matrix factorizations, adopting factorization-free indirect solvers, and reaching high accuracy with low-precision arithmetic.}\label{fig:explanation}
    \vspace{-6mm}
\end{figure}

In this manuscript, we address this question by presenting an implicit method for solving primal-dual interior-point-based quadratic programs (Fig.~\ref{fig:explanation}). In particular, our primary contributions are:
\begin{enumerate}
    \item The introduction of an auxiliary variable and a softplus retraction map that reformulates the KKT conditions into a spectrally bounded linear system, preventing the eigenvalue divergence characteristic of standard interior-point methods.
    \item A theoretical proof demonstrating why the softplus retraction map is the unique map capable of guaranteeing the spectral boundedness of the underlying KKT system.
    \item Preliminary results showcasing how our implicit method succeeds where standard interior-point methods have traditionally struggled, specifically enabling the reuse of matrix factorizations for inexact Newton steps, facilitating factorization-free iterative solvers, and achieving high accuracy in low-precision arithmetic.
\end{enumerate}
\newpage
The remainder of this paper is organized as follows: Section~\ref{sec:preliminaries} reviews the fundamentals of quadratic programming and the standard primal-dual interior-point framework. Section~\ref{sec:implicit} introduces our proposed implicit complementarity formulation and discusses its theoretical properties and conceptual advantages. These benefits are numerically validated through the experiments presented in Section~\ref{sec:experiments}. The method's broader impact and future research directions are discussed in Section~\ref{sec:future}, followed by concluding remarks in Section~\ref{sec:conclusion}.

\section{Preliminaries}\label{sec:preliminaries}
In this section, we formally introduce quadratic programming and the standard (explicit) primal-dual interior-point framework. This will set the stage for the next section, where we introduce our proposed implicit formulation.
\subsection{Quadratic Programming}
A standard convex Quadratic Program (QP) is given by the following primal-dual pair:
\begin{subequations} \label{eq:qp_problems_eq}
\begin{alignat}{2}
    &\text{Primal:} \quad && \begin{aligned}[t]
        & \min_{x} && \tfrac{1}{2} x^\top Q x + q^\top x \\
        & \text{s.t.} && Ax \ge b \\
        & && Cx = d
    \end{aligned} \label{eq:qp_primal_eq} \\[1em]
    &\text{Dual:} \quad && \begin{aligned}[t]
        & \max_{x,\lambda,\gamma} && -\tfrac{1}{2} x^\top Q x + b^\top \lambda + d^\top \gamma \\
        & \text{s.t.} && Qx + q - A^\top \lambda - C^\top \gamma = 0 \\
        & && \lambda \ge 0
    \end{aligned} \label{eq:qp_dual_eq}
\end{alignat}
\end{subequations}
where $x\in\mathbb{R}^n$ is the primal variable, $s\in\mathbb{R}^m$ is the slack variable, $\lambda\in\mathbb{R}^m$ is the inequality-dual variable, and $\gamma\in\mathbb{R}^p$ is the equality-dual variable. The problem data consists of the symmetric positive semi-definite objective matrix $Q \in \mathbb{R}^{n \times n}$ ($Q \succeq 0$), the linear objective vector $q \in \mathbb{R}^n$, the inequality constraint matrix $A \in \mathbb{R}^{m \times n}$ with bounds $b \in \mathbb{R}^m$, and the equality constraint matrix $C \in \mathbb{R}^{p \times n}$ with bounds $d \in \mathbb{R}^p$. The Karush-Kuhn-Tucker (KKT) conditions for optimality are: 
\begin{subequations} \label{eq:kkt_conditions}
\begin{align}
    &\text{Stationarity: } & Qx + q - A^\top \lambda - C^\top \gamma = 0 \\
    &\text{Primal Feasibility: } & \begin{aligned}[t] 
                                    Ax - b - s &= 0, \quad s \ge 0 \\ 
                                    Cx - d &= 0 
                                   \end{aligned} \\
    &\text{Dual Feasibility: } & \lambda \ge 0 \\
    &\text{Complementarity: } & \lambda \odot s = 0 \label{eq:kkt_complementarity}
\end{align}
\end{subequations}
where $\odot$ denotes the element-wise product. Solving the QP is equivalent to finding the root of this system of equations subject to the non-negativity constraints on $\lambda$ and $s$. To find the primal, dual and slack variables that satisfy eqs.~\eqref{eq:kkt_conditions}, we define the global state vector 
\begin{equation*}
z = \begin{bmatrix}x^\top& \lambda^\top&  \gamma^\top& s^\top\end{bmatrix}^\top
\end{equation*}
and the residual vector as
\begin{equation}\label{eq:residuals}
    r(z) = 
    \begin{bmatrix}
        r_x \\
        r_i \\
        r_e\\
        r_c
    \end{bmatrix}
    =
    \begin{bmatrix}
        Qx + q - A^\top \lambda - C^\top \gamma \\
        Ax - b - s \\
        Cx-d\\
        \lambda\odot s
    \end{bmatrix}\,.
\end{equation}
\subsection{Primal-Dual Interior-Point Methods}
Primal-dual interior-point methods relax the complementarity condition as
\begin{equation}\label{eq:complementarity}
    r_{c,\mu}(z) = \lambda\odot s - \mu
\end{equation}
and thus, the smoothed version of the residual in~\eqref{eq:residuals} becomes
\begin{equation}\label{eq:pd_residuals}
    r_\mu(z) = \begin{bmatrix} r_x^\top& r_i^\top & r_e^\top & r_{c,\mu}^\top\end{bmatrix}^\top\,.
\end{equation}
Generally, primal-dual interior-point methods operate by taking Newton steps that simultaneously aim to drive the smoothed residual $r_\mu(z)$ to zero while systematically reducing the barrier parameter $\mu$, leading to $r(z^*) = \left. r_\mu(z^*) \right|_{\mu=0} \approx 0$. In other words, the algorithm converges when the total duality gap,
\begin{equation*}
    \eta = \lambda^\top s \,,
\end{equation*}
falls below a specified tolerance. In practice, $\mu$ is dynamically updated as a fraction of the average gap:
\begin{equation}\label{eq:mu_reduction}
    \mu = \sigma \frac{\eta}{m} \,,
\end{equation}
where $\sigma \in (0, 1]$ is a centering parameter controlling the targeted gap reduction. For a given $\mu$ the solution to~\eqref{eq:pd_residuals} is obtained through Newton's method:
\begin{equation*}
    r_\mu(z+\Delta z) \approx r_\mu(z) + \nabla_zr_\mu(z) \Delta z = 0\,.
\end{equation*}
Given the residual definition in~\eqref{eq:pd_residuals}, this leads to the following linear system,
\begin{equation*} 
\begin{bmatrix}
Q & -A^\top & -C^\top & 0 \\
A & 0 & 0 & -I \\
C & 0 & 0 & 0 \\
0 & S & 0 & \Lambda \\
\end{bmatrix}
\begin{bmatrix}
\Delta x \\
\Delta \lambda \\
\Delta \gamma \\
\Delta s
\end{bmatrix}
= -
\begin{bmatrix}
r_x \\
r_i \\
r_e \\
r_{c,\mu}
\end{bmatrix} 
\, ,
\end{equation*}
with $S=\text{diag}(s)$ and $\Lambda=\text{diag}(\lambda)$. After some simplifications, it can be reduced into the following form:
\begin{equation} \label{eq:explicit_partially_condensed}
\underbrace{
\begin{bmatrix}
Q & -A^\top & -C^\top \\
-A & -\Lambda^{-1} S & 0 \\
-C & 0 & 0
\end{bmatrix}
}_{E(\lambda, s)}
\begin{bmatrix}
\Delta x \\
\Delta \lambda \\
\Delta \gamma
\end{bmatrix}
=
\begin{bmatrix}
-r_x \\
r_i + \Lambda^{-1} r_{c,\mu} \\
r_{e}
\end{bmatrix} \, ,
\end{equation}
Throughout this paper, we will refer to this standard approach as the \textit{explicit} primal-dual method, because the complementarity condition appears as an explicit algebraic expression.

The fundamental limitation of this traditional formulation stems from the behavior of the matrix $E(\lambda, s)$ near the optimal solution. As strict complementarity dictates that $\lambda \odot s \rightarrow 0$, the elements of the diagonal matrix $\Lambda^{-1}S$ experience extreme scaling disparities---either blowing up to infinity (when $\lambda_i \rightarrow 0$) or vanishing to zero (when $s_i \rightarrow 0$). While the resulting ill-conditioning is widely known to be benign \cite{benzi2005numerical} and can be circumvented via exact matrix factorizations, the per-iteration cost of these factorizations remains a significant computational bottleneck. Attempts to avoid this cost typically rely on exploiting tight eigenvalue clustering, a property that deteriorates as the matrix eigenvalues diverge~\cite{gondzio2012matrix}.

The following section introduces our \emph{implicit} approach to the complementarity condition, which ---among other advantages--- ensures a bounded eigenvalue spectrum.


\section{Implicit Complementarity}~\label{sec:implicit}
To satisfy the complementarity condition in~\eqref{eq:complementarity}, we adopt the alternative approach pioneered by~\cite{permenter2023log, permenter2023geodesic}. Instead of treating complementarity as an explicit equation that we solve through Newton's method, we augment our system with an auxiliary variable $v \in \mathbb{R}^m$. Accordingly, our new state vector becomes:
\begin{equation*}
    z = \begin{bmatrix} x^\top & \lambda^\top & \gamma^\top & s^\top & v^\top \end{bmatrix}^\top\,.
\end{equation*}
This auxiliary variable is designed to enforce complementarity implicitly, by construction. To achieve this, we introduce a retraction map:
\begin{equation}\label{eq:retraction_map}
    b_\mu : \mathbb{R} \rightarrow \mathbb{R}_{+}\;\;\text{such that}\;\;b_\mu(v) \cdot b_\mu(-v) = \mu\,.
\end{equation}
While the works of~\cite{permenter2023log, permenter2023geodesic} explicitly select the exponential function $b_\mu(v)=\sqrt{\mu}e^v$ for the mapping in~\eqref{eq:retraction_map}, we make no such initial assumptions. Instead, we maintain $b_\mu$ as a generic function, whose optimal form will be formally established later. By enforcing $\lambda = b_\mu(v)$ and $s = b_\mu(-v)$, we replace the explicit complementarity residual $r_{c, \mu}$ with two new residuals:
\begin{equation}\label{eq:implicit_res_components}
    r_{\lambda,\mu} = \lambda - b_\mu(v) \, , \quad r_{s,\mu} = s - b_\mu(-v) \, .
\end{equation}
Consequently, the complete residual vector for our implicit formulation is defined as:
\begin{equation*} \label{eq:impl_residuals}
    r_\mu(z) = \begin{bmatrix} r_x^\top & r_i^\top & r_e^\top & r_{\lambda, \mu}^\top & r_{s, \mu}^\top \end{bmatrix}^\top \, .
\end{equation*}
To formulate the Newton step, we must linearize the implicit complementarity residuals defined in~\eqref{eq:implicit_res_components}. Applying the chain rule we obtain:
\begin{subequations}
\begin{align*}
    \nabla_v r_{\lambda,\mu} &= -\nabla_v (b_\mu(v)) = -db_\mu(v)\,, \\
    \nabla_v r_{s,\mu} &= -\nabla_v (b_\mu(-v)) = db_\mu(-v)\,,
\end{align*}
\end{subequations}
where $db_\mu(\cdot)$ denotes the derivative of $b_\mu$ with respect to its argument. To express these gradients in matrix form, we define the following diagonal matrices:
\begin{equation*}
    B_\mu(v) = \text{diag}(db_\mu(v))\,.
\end{equation*}
The resulting KKT system is
\begin{equation*}\label{eq:implicit_uncondensed}
\begin{bmatrix}
Q & -A^\top & -C^\top & 0 & 0 \\
A & 0 & 0 & -I & 0 \\
C & 0 & 0 & 0 & 0 \\
0 & I & 0 & 0 & -B_{\mu}(v) \\
0 & 0 & 0 & I & B_{\mu}(-v) \\
\end{bmatrix}
\begin{bmatrix}
\Delta x \\
\Delta \lambda \\
\Delta \gamma \\
\Delta s \\
\Delta v
\end{bmatrix} \\
= -
\begin{bmatrix}
r_x \\
r_i \\
r_e \\
r_{\lambda,\mu}\\
r_{s,\mu}
\end{bmatrix}\,,
\end{equation*}
which can be reduced to
\begin{multline}\label{eq:implicit_partially_condensed}
    \underbrace{
    \begin{bmatrix}
        Q - A^\top A & -A^\top \left(B_{\mu}(v)+B_{\mu}(-v)\right) & -C^\top \\
        -A & -B_{\mu}(-v) & 0 \\
        -C & 0 & 0 \\
    \end{bmatrix}
    }_{J(v)}
    \begin{bmatrix}
        \Delta x\\
        \Delta v\\
        \Delta \gamma
    \end{bmatrix}
    =\\                                       
    \begin{bmatrix}
        -r_{x}+A^\top (r_{i}-r_{\lambda,\mu}+r_{s,\mu})\\
        r_{i} + r_{s,\mu}\\
        r_{e}
    \end{bmatrix}\,.
\end{multline}
We denote  $J(v)$ as the \emph{implicit matrix}, which depends on variable $v$ and the---yet to be defined---retraction map $b_\mu$ in eq.~\eqref{eq:retraction_map}. 


\begin{figure}[t]
	\centering
    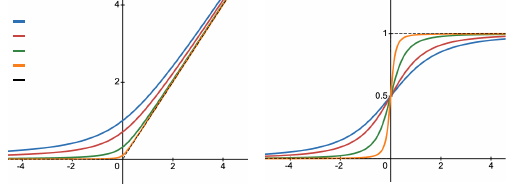
	\caption{The softplus retraction map (left) and its derivative (right) for varying values of the barrier parameter $\mu$. As proven in Theorem~\ref{thm:unique_retraction_map}, the softplus is specifically chosen because it is the unique retraction map that fulfills the criteria outlined in Definition~\ref{def:retraction_map}. Notably, its derivative is strictly bounded between 0 and 1 (right). Such boundedness guarantees that the eigenvalue spectrum of the linear system stays within a compact set as the solution is approached. See Fig.~\ref{fig:explanation} for a spatial illustration of how two softplus retraction maps implicitly enforce complementarity.}\label{fig:softplus}
    \vspace{-4mm}
\end{figure}


\subsection{Choosing the retraction map}
We now turn to the design of the retraction map $b_\mu$. Aiming to find the most appropriate mapping, rather than selecting an arbitrary function or defaulting to the exponential map of~\cite{permenter2023log, permenter2023geodesic}, we take a principled approach. We observe the implicit matrix in~\eqref{eq:implicit_partially_condensed} and identify two desirable properties the retraction map should hold to facilitate solving the respective linear system. Once defined, we proceed to find the map that satisfies them.

First, observe that if the sum of the derivative matrices simplifies to the identity—that is, 
\begin{equation}\label{eq:property1_matrix}
    B_{\mu}(v) + B_{\mu}(-v) = I\,,
\end{equation}
then the implicit matrix $J(v)$ cleanly reduces to:
\begin{equation}\label{eq:implicit_partially_condensed_softrelu}
J(v)= 
\begin{bmatrix}
Q - A^\top A & -A^\top & -C^\top \\
-A & -B_{\mu}(-v) & 0 \\
-C & 0 & 0 \\
\end{bmatrix}\,,
\end{equation}
which restores the structural symmetry.
Second, if we constrain the retraction map such that its derivative $B_{\mu}(-v)$ remains bounded, we secure two critical numerical advantages: (i) the corresponding eigenvalues of the implicit matrix $J(v)$ remain strictly bounded by the derivative range of the retraction map, a property that the exponential map used in~\cite{permenter2023log, permenter2023geodesic} lacks; and (ii) by maintaining bounded matrix entries, this formulation avoids the inherent eigenvalue divergence to infinity that characterizes the explicit matrix $E(\lambda, s)$ in~\eqref{eq:explicit_partially_condensed} as iterates approach the optimal solution.



Building upon the initial property in~\eqref{eq:retraction_map}, and guided by these new structural and numerical requirements, we now formalize the complete definition of the retraction map:

\begin{definition}[Retraction map] 
\label{def:retraction_map}
Let $b_\mu : \mathbb{R} \rightarrow \mathbb{R}_{+}$ be a map that satisfies the following properties
\begin{subequations}
\begin{align}
    &b_{\mu}(v) \cdot b_{\mu}(-v) = \mu\,, \label{def:retraction_map_1}\\
    &db_{\mu}(v) + db_{\mu}(-v) = 1\,, \label{def:retraction_map_2}\\
    &0 < db_{\mu}(v) \leq 1 \label{def:retraction_map_3}
\end{align}
\end{subequations}
then, $b_\mu$ is denoted as a retraction map.
\end{definition}
Notice that the exponential map in~\cite{permenter2023log, permenter2023geodesic} only satisfies the original complementarity requirement~\eqref{def:retraction_map_1} and does not comply with~\eqref{def:retraction_map_2} and \eqref{def:retraction_map_3}.

Having established the properties required for the retraction map to guarantee a spectrally bounded system, a natural question arises: \emph{does such a function exist, and is it unique?} The following theorem answers this affirmatively, proving that the softplus is the sole function fulfilling this definition.
\begin{theorem} \label{thm:unique_retraction_map}
Let $b_\mu : \mathbb{R} \rightarrow \mathbb{R}_+$ be a retraction map satisfying the properties in Definition~\ref{def:retraction_map}. Then, for any $\mu > 0$, the unique function that fulfills these conditions is the softplus function, given by
\begin{equation}\label{eq:softplus}
    b_\mu(v) = \frac{v + \sqrt{v^2 + 4\mu}}{2} \, .
\end{equation}
\end{theorem}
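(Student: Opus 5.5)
Our strategy is to combine the two algebraic identities \eqref{def:retraction_map_1} and \eqref{def:retraction_map_2} into a single relation that pins down $b_\mu$ pointwise, and then check that the resulting function meets \eqref{def:retraction_map_3}.

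\textbf{Step 1: the difference is the identity.} Set $g(v) = b_\mu(v) - b_\mu(-v)$. Then
\begin{equation*}
g'(v) = db_\mu(v) + db_\mu(-v) = 1
\end{equation*}
by \eqref{def:retraction_map_2}, so $g(v) = v + c$ for some constant $c$. Since $g$ is odd, $g(0)=0$, which forces $c=0$. Hence
\begin{equation*}
b_\mu(v) - b_\mu(-v) = v \quad \text{for all } v\in\mathbb{R}.
\end{equation*}
Here we assume $b_\mu$ is differentiable; Definition~\ref{def:retraction_map} implicitly requires this through $db_\mu$.

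\textbf{Step 2: solve for $b_\mu$.} Write $p = b_\mu(v)$ and $n = b_\mu(-v)$. Then $p - n = v$ and, by \eqref{def:retraction_map_1}, $pn = \mu$. Substituting $n = p - v$ gives the quadratic
\begin{equation*}
p^2 - vp - \mu = 0,
\end{equation*}
with roots $p = \tfrac{1}{2}\bigl(v \pm \sqrt{v^2+4\mu}\bigr)$. Since $\mu>0$, we have $\sqrt{v^2+4\mu} > |v|$. The minus root is therefore strictly negative, while the codomain $\mathbb{R}_+$ requires $p>0$. So the plus root is forced for every $v$, which is exactly \eqref{eq:softplus}. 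This establishes uniqueness, and no continuity-based branch selection is needed because the sign argument works pointwise.

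\textbf{Step 3: existence.} It remains to verify that \eqref{eq:softplus} satisfies all three properties.
\begin{itemize}
\item Rationalizing gives $b_\mu(-v) = \tfrac{1}{2}\bigl(-v+\sqrt{v^2+4\mu}\bigr)$, and the product is $\tfrac14\bigl((v^2+4\mu) - v^2\bigr) = \mu$, which is \eqref{def:retraction_map_1}.
\item The derivative is $db_\mu(v) = \tfrac12\bigl(1 + v/\sqrt{v^2+4\mu}\bigr)$. The term $v/\sqrt{v^2+4\mu}$ is odd, so $db_\mu(v) + db_\mu(-v) = 1$, which is \eqref{def:retraction_map_2}.
\item Since $|v/\sqrt{v^2+4\mu}|<1$, we get $0 < db_\mu(v) < 1$, which gives \eqref{def:retraction_map_3}.
\end{itemize}

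The only delicate point is Step~1, namely determining the integration constant. Oddness of $g$ disposes of it immediately. Everything after that is routine algebra. Note that \eqref{def:retraction_map_3} plays no role in uniqueness; it is only confirmed afterward.
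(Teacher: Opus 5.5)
Your proof is correct and follows the paper's argument essentially step for step. You integrate the derivative-sum condition to get $b_\mu(v)-b_\mu(-v)=v$, fix the constant at $v=0$, reduce the product condition to $p^2 - vp - \mu = 0$, discard the negative root by positivity, and confirm the derivative bound; your only addition is explicitly verifying the product and derivative-sum conditions for the softplus in the existence step, which the paper leaves implicit.
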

A visual representation of this function and its strictly bounded derivative is provided in Fig.~\ref{fig:softplus}.

\begin{proof}
From~\eqref{def:retraction_map_2}, we have $db_\mu(v) + db_\mu(-v) = 1$. Integrating this equation with respect to $v$ yields
\begin{equation}
    b_\mu(v) - b_\mu(-v) = v + C \, , \label{eq:proof_integration}
\end{equation}
where $C$ is an integration constant. Evaluating~\eqref{eq:proof_integration} at $v=0$ yields $b_\mu(0) - b_\mu(0) = C$, implying $C = 0$. Thus, $b_\mu(-v) = b_\mu(v) - v$. Substituting this result into~\eqref{def:retraction_map_1}, $b_\mu(v) \cdot b_\mu(-v) = \mu$, we obtain a quadratic equation in terms of $b_\mu(v)$:
\begin{equation*}
    b_\mu(v)^2 - v b_\mu(v) - \mu = 0 \, .
\end{equation*}
Solving for $b_\mu(v)$ using the quadratic formula yields
\begin{equation*}
    b_\mu(v) = \frac{v \pm \sqrt{v^2 + 4\mu}}{2} \, .
\end{equation*}
By Definition~\ref{def:retraction_map}, the retraction map maps to strictly positive real numbers ($b_\mu : \mathbb{R} \to \mathbb{R}_+$). Since $\sqrt{v^2 + 4\mu} > |v|$ for any $\mu > 0$, the negative root yields $b_\mu(v) < 0$ for all $v \in \mathbb{R}$. Thus, we must select the positive root. Finally, we verify that this unique solution satisfies~\eqref{def:retraction_map_3}. The derivative is
\begin{equation*}
    db_\mu(v) = \frac{1}{2} \left( 1 + \frac{v}{\sqrt{v^2 + 4\mu}} \right) \, .
\end{equation*}
Because the fractional term is strictly bounded as $\frac{v}{\sqrt{v^2 + 4\mu}} \in (-1, 1)$ for all $v \in \mathbb{R}$ and $\mu > 0$, it follows directly that $0 < db_\mu(v) < 1$. This satisfies the bounding condition and concludes the proof. 
\end{proof}

\subsection{Superiority of Softplus over Exponential}
Existing implicit interior-point methods~\cite{permenter2023geodesic,permenter2023log} typically rely on the exponential retraction map, $b_\mu(v) = \sqrt{\mu}\exp(v)$. While this satisfies the basic complementarity condition, the proposed softplus map in eq.~\eqref{eq:softplus} offers two fundamental numerical advantages.

First, as established in Definition 1, the softplus map possesses a strictly bounded derivative that symmetrically sums to the identity. These properties guarantee that the underlying KKT matrix in~\eqref{eq:implicit_partially_condensed_softrelu} remains symmetric and spectrally bounded. The exponential map lacks these properties, allowing eigenvalues to diverge near the solution.

Second, the softplus map prevents numerical shocks during barrier updates ($\mu^+ = \sigma\mu$, with $\sigma \in (0,1)$). In the exponential map, $\mu$ acts as a global multiplier: reducing the barrier artificially shrinks all constraint mappings by $\sqrt{\sigma}$. The softplus map elegantly resolves this by natively decoupling the barrier parameter from active constraints while correctly scaling inactive ones. We formalize this in the following theorem.

\begin{theorem}\label{thm:barrier_reduction}
Let $\mu^+ = \sigma\mu$ with $\sigma \in (0,1)$, and define $\lambda_\mu(v) = b_\mu(v)$ and $s_\mu(v) = b_\mu(-v)$, where $b_\mu$ is the softplus map in (15). For any fixed $v \neq 0$:
\begin{itemize}
    \item If $v > 0$: $0 < s_{\mu^+}(v) \le \frac{\sigma\mu}{v}$ and $0 \le \lambda_\mu(v) - \lambda_{\mu^+}(v) \le \frac{(1-\sigma)\mu}{v}$.
    \item If $v < 0$: $0 < \lambda_{\mu^+}(v) \le \frac{\sigma\mu}{|v|}$ and $0 \le s_\mu(v) - s_{\mu^+}(v) \le \frac{(1-\sigma)\mu}{|v|}$.
\end{itemize}
\end{theorem}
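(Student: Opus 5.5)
By the symmetry $s_\mu(v)=b_\mu(-v)=\lambda_\mu(-v)$, the case $v<0$ is the case $v>0$ applied at $-v$ with the roles of $\lambda$ and $s$ exchanged. So it suffices to fix $v>0$ and prove the two bounds there. The plan is to rationalize the softplus formula~\eqref{eq:softplus} so that every quantity becomes an explicit fraction in $\sqrt{v^2+4\mu}$.

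\textbf{The small component.} Multiply numerator and denominator by the conjugate:
\[
s_{\mu^+}(v)=\frac{-v+\sqrt{v^2+4\sigma\mu}}{2}=\frac{2\sigma\mu}{v+\sqrt{v^2+4\sigma\mu}}.
\]
This is strictly positive since $\sigma\mu>0$. Bounding the denominator below by $2v$ gives $s_{\mu^+}(v)\le \sigma\mu/v$. (Alternatively, use the product identity~\eqref{def:retraction_map_1} together with $\lambda_{\mu^+}(v)\ge v$.)

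\textbf{The large component.} Write
\[
\lambda_\mu(v)-\lambda_{\mu^+}(v)=\tfrac12\bigl(\sqrt{v^2+4\mu}-\sqrt{v^2+4\sigma\mu}\bigr)
=\frac{2(1-\sigma)\mu}{\sqrt{v^2+4\mu}+\sqrt{v^2+4\sigma\mu}}.
\]
Since $\sigma<1$, this is nonnegative (in fact positive). Each square root in the denominator is at least $v$, so the denominator is at least $2v$, which yields the upper bound $(1-\sigma)\mu/v$.

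\textbf{The case $v<0$.} Use $\lambda_\mu(v)=s_\mu(|v|)$ and $s_\mu(v)=\lambda_\mu(|v|)$, which follow directly from the definitions, and apply the two bounds above at $|v|$. There is no real obstacle here. The only care needed is choosing the conjugate rationalization so that the denominators are visibly bounded below by $2|v|$; the rest is direct algebra.
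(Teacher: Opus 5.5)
Your proof is correct. For the first bound it coincides with the paper's argument. Your rationalized expression $2\sigma\mu/(v+\sqrt{v^2+4\sigma\mu})$ is exactly $\sigma\mu/\lambda_{\mu^+}(v)$. The paper bounds this using $\lambda_{\mu^+}(v)=v+s_{\mu^+}(v)\ge v$, which is the alternative you mention.

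For the second bound the routes genuinely differ. The paper never computes a difference of square roots. It relies on the identities $b_\mu(v)-b_\mu(-v)=v$ and $b_\mu(v)b_\mu(-v)=\mu$, plus monotonicity of $b_\mu$ in $\mu$:
\begin{itemize}
\item Since $\lambda-s=v$ is fixed, $\lambda_\mu(v)-\lambda_{\mu^+}(v)=s_\mu(v)-s_{\mu^+}(v)$.
\item Monotonicity gives $\lambda_{\mu^+}\le\lambda_\mu$, hence $s_{\mu^+}=\sigma\mu/\lambda_{\mu^+}\ge\sigma s_\mu$.
\item So the change is at most $(1-\sigma)s_\mu(v)\le(1-\sigma)\mu/v$.
\end{itemize}

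Your conjugate computation instead gives the exact value $2(1-\sigma)\mu/(\sqrt{v^2+4\mu}+\sqrt{v^2+4\sigma\mu})$. This buys you two things. Nonnegativity (indeed strict positivity) is immediate, without appealing separately to monotonicity in $\mu$. It also shows the bound $(1-\sigma)\mu/v$ is asymptotically tight when $\mu\ll v^2$.

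The paper's structural argument buys something else: a sharper and more interpretable intermediate estimate. The dominant variable moves by at most a $(1-\sigma)$ fraction of the current complementary variable $s_\mu(v)$. You can recover this from your formula by bounding only $\sqrt{v^2+4\sigma\mu}$ below by $v$, since $2\mu/(v+\sqrt{v^2+4\mu})=s_\mu(v)$.

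Your reduction of the case $v<0$ via $\lambda_\mu(v)=s_\mu(|v|)$ and $s_\mu(v)=\lambda_\mu(|v|)$ is the same symmetry argument the paper uses.
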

Consequently, once the sign of $v$ identifies the active side, reducing the barrier leaves the dominant variable nearly unchanged and only modifies the complementary variable by $\mathcal{O}(\mu)$. Conversely, if we were to instead use the exponential map $b_\mu(v) = \sqrt{\mu}\exp(v)$, reducing the barrier uniformly rescales both variables: $\lambda_{\mu^+}(v) = \sqrt{\sigma}\lambda_\mu(v)$ and $s_{\mu^+}(v) = \sqrt{\sigma}s_\mu(v)$

\begin{proof}
From eq.~\eqref{eq:softplus}, the softplus map satisfies $b_\mu(v) - b_\mu(-v) = v$ and $b_\mu(v)b_\mu(-v) = \mu$. 

Assume first that $v > 0$. By definition, we can write $\lambda_{\mu^+}(v) = v + s_{\mu^+}(v)$ and $s_{\mu^+}(v) = \mu^+ / \lambda_{\mu^+}(v)$. Because $s_{\mu^+}(v) > 0$, it is clear that $\lambda_{\mu^+}(v) \ge v$. Substituting this into the expression for $s_{\mu^+}(v)$ yields the first bound:
\begin{equation*}
    0 < s_{\mu^+}(v) = \frac{\sigma\mu}{\lambda_{\mu^+}(v)} \le \frac{\sigma\mu}{v}.
\end{equation*}

Next, we evaluate the difference $\lambda_\mu(v) - \lambda_{\mu^+}(v)$. From (15), $b_\mu(v)$ is monotonically increasing in $\mu$, which implies $\lambda_{\mu^+}(v) \le \lambda_\mu(v)$. Thereby, we can bound $s_{\mu^+}(v)$ as:
\begin{equation*}
    s_{\mu^+}(v) = \frac{\sigma\mu}{\lambda_{\mu^+}(v)} \ge \frac{\sigma\mu}{\lambda_\mu(v)} = \sigma s_\mu(v).
\end{equation*}

Since the difference in $\lambda$ equals the difference in $s$ (because $\lambda - s = v$ is constant for a fixed $v$), we have:
\begin{align*}
    0 \le \lambda_\mu(v) - \lambda_{\mu^+}(v) &= s_\mu(v) - s_{\mu^+}(v) \\
    &\le s_\mu(v) - \sigma s_\mu(v) = (1-\sigma)s_\mu(v).
\end{align*}

Finally, because $v > 0$ implies $\lambda_\mu(v) \ge v$, we know $s_\mu(v) = \mu / \lambda_\mu(v) \le \mu / v$. Substituting this into the inequality above provides the second bound:
\begin{equation*}
    0 \le \lambda_\mu(v) - \lambda_{\mu^+}(v) \le \frac{(1-\sigma)\mu}{v}.
\end{equation*}

The case $v < 0$ follows by applying the exact same argument to $-v$, exchanging the roles of $\lambda$ and $s$. 

For the exponential map $b_\mu(v)=\sqrt{\mu}e^v$, the uniform scaling property follows immediately by direct substitution:
\begin{equation*}
    \lambda_{\mu^+}(v) = \sqrt{\sigma}\lambda_\mu(v), \quad
    s_{\mu^+}(v) = \sqrt{\sigma}s_\mu(v).
\end{equation*}
This concludes the proof.
\end{proof}
After theoretically motivating the appropriateness of the softplus as the retraction map for handling complementarity implicitly, in the next section we proceed to validate it numerically.

\begin{figure*}[t]
    \centering
    \input{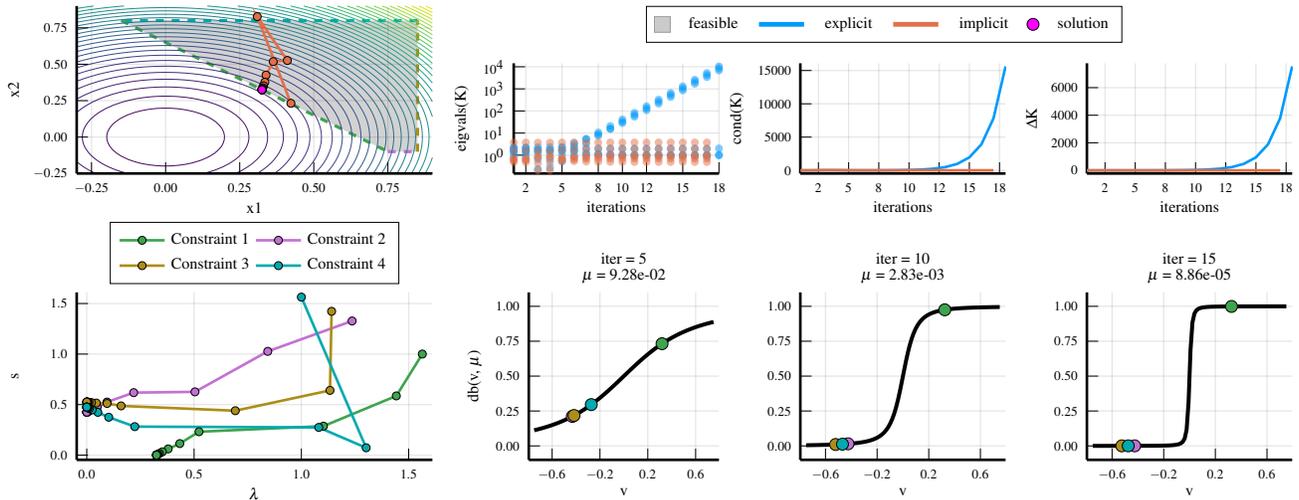}
    \vspace{-5mm}
    \caption{Fundamental numerical properties of the implicit representation evaluated on the synthetic toy problem. \textbf{Column 1:} Evolution of the primal iterates $x$ within the optimization landscape for our implicit approach (red circles) and the optimal solution (magenta). The bottom plot shows the corresponding dual-slack variables $(\lambda, s)$ for the four constraints. \textbf{Columns 2--4, Top row:} Eigenvalue spectrum, condition number, and iteration-to-iteration KKT matrix changes. Here, we compare the standard explicit matrix (blue), which diverges and experiences drastic changes near the solution, against our implicit matrix (red), which remains strictly bounded and smoothly varying. \textbf{Columns 2--4, Bottom row:} Evolution of the retraction map's derivative across iterations, illustrating how the auxiliary variables (color-coded by constraint) smoothly settle into their respective active or inactive states.}
    \label{fig:fundamentals}
    \vspace{-2mm}
\end{figure*}

\newpage

\begin{figure*}[t]
    \centering
    \input{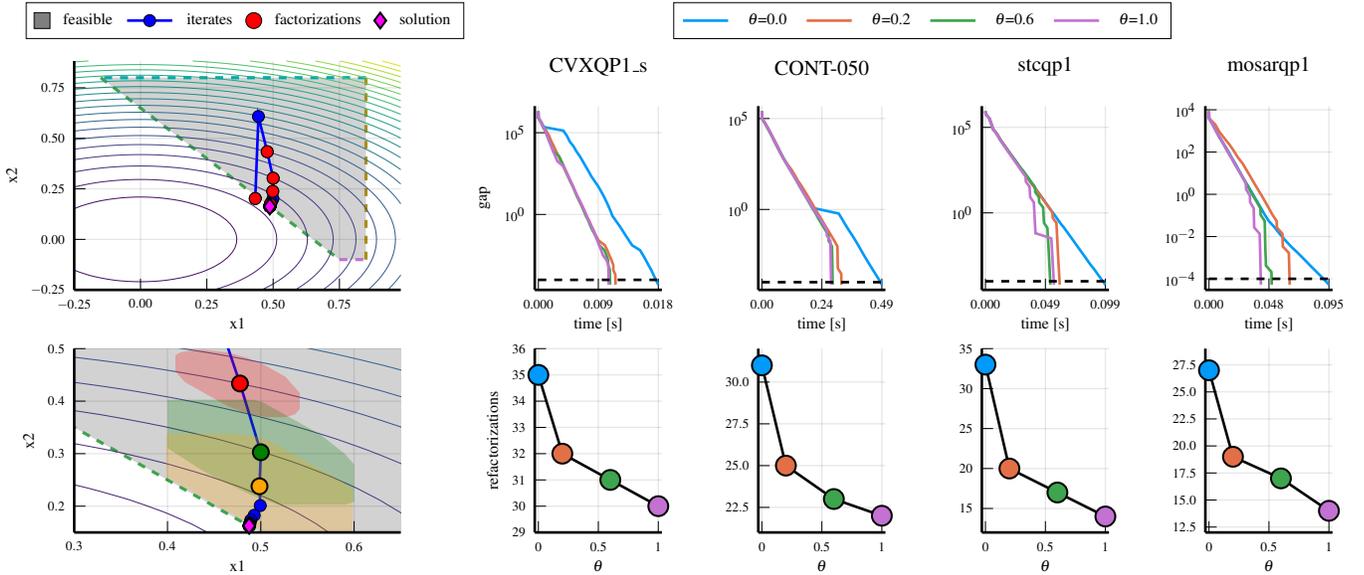}
    \vspace{-8mm}
    \caption{Evaluation of inexact Newton steps, reducing required factorizations by freezing the KKT matrix. \textbf{Column 1:} Synthetic problem iterates (top) and a zoom-in on the final steps (bottom). Red markers highlight explicitly computed factorizations (only 4 of 18 iterations), while the red, green, and yellow regions indicate the validity of the frozen factorizations at those steps. \textbf{Columns 2--4:} Performance on the Maros-Meszaros dataset~\cite{maros1999repository}. \textbf{Top row:} Duality gap reduction over time for different forcing factors $\theta$. \textbf{Bottom row:} Total number of matrix factorizations versus $\theta$. Higher forcing factors enable more aggressive matrix freezing, significantly accelerating overall convergence.}
    \label{fig:inexact_newton}
    \vspace{-5mm}
\end{figure*}
\section{Numerical Experiments}~\label{sec:experiments}
In this section, we present illustrative numerical evaluations to validate the theoretical attributes outlined in the previous section. Throughout these evaluations, we compare the standard primal-dual interior-point baseline~\eqref{eq:explicit_partially_condensed} against our proposed method~\eqref{eq:implicit_partially_condensed}, which uses the softplus retraction map from eq.~\eqref{eq:softplus}. Rather than claiming absolute performance gains—which heavily depend on implementation details and hyperparameter tuning—these experiments are designed to highlight the conceptual benefits of our approach. Our numerical validation is conducted in an orderly and principled manner:

First, we aim to demonstrate the fundamental numerical advantages of our implicit method over the standard explicit formulation. To this end, we use a simple two-dimensional synthetic problem designed to isolate and clearly illustrate the specific numerical phenomena of interest. This strictly convex QP consists solely of four inequality constraints, given by the following data:
\begin{equation} \label{eq:synthetic_qp}
        Q = I_2,\;\; q = 0,\;\;
        A = \begin{bmatrix} 1 & 1 \\ 0 & 1 \\ -1 & 0 \\ 0 & -1 \end{bmatrix}, \;\; b = \begin{bmatrix} 0.65 \\ -0.1 \\ -0.85 \\ -0.8 \end{bmatrix}.
\end{equation}

Second, after validating the numerical benefits of the implicit approach, we focus on algorithmic opportunities that a spectrally bounded linear system offers. In particular, we focus on three specific paradigms: reusing factorizations across iterations, factorization-free iterative solvers and applicability to low-precision arithmetic. These tests are conducted over problems from the well-established Maros-Meszaros dataset~\cite{maros1999repository}. This collection features diverse real-world problems with varying degrees of sparsity, dimensionality, and scaling. 

The implementation is conducted in Julia, and the source code is made publicly available\footnote{Code: \url{https://github.com/jonarriza96/i2pd.jl}}. Every problem instance is equilibrated using Ruiz scaling~\cite{ruiz2001scaling}. 

\subsection{Fundamental benefits of implicit complementarity}
Before demonstrating the practical opportunities enabled by our implicit approach, we first verify its core numerical properties: spectral boundedness and structural stability across iterations. To achieve this, we analyze the synthetic toy problem introduced in~\eqref{eq:synthetic_qp} and compare various numerical metrics between the explicit and implicit formulation. The results are summarized in Fig.~\ref{fig:fundamentals} and discussed in three parts below.

\subsubsection{Evolution of primals, duals, and slacks}
The state iterates corresponding to the implicit solution are shown in the first column of Fig.~\ref{fig:fundamentals}. The top row illustrates the optimization landscape, depicting the contour lines and the gray-shaded feasible region of the inequalities, alongside the evolution of the primal variables $x$ (red) and the final optimal solution (magenta). The second row demonstrates the evolution of the dual-slack pairs ($\lambda, s$) for each constraint. As seen in the primal plot, only the first constraint becomes active ($s=0$ and $\lambda>0$), while the remaining constraints remain inactive ($s>0$ and $\lambda=0$).

\subsubsection{Evolution of retraction}
The evolution of the retraction map's derivative---the only component that changes in the implicit matrix~\eqref{eq:implicit_partially_condensed_softrelu} across iterations---is shown in columns 2, 3, and 4 of the second row. Here, we observe that the derivative of the softplus function becomes sharper as the iterates settle on their respective sides ($1$ for the single active constraint and $0$ for the remaining inactive constraints).

\subsubsection{Conditioning and matrix differences}
The fundamental advantages of our implicit representation are highlighted in columns 2, 3, and 4 of the first row. As expected, the eigenvalue spectrum of the explicit matrix diverges, whereas the spectrum of the implicit matrix remains strictly bounded. This is further evidenced by the condition number, which explodes for the explicit formulation but remains well-conditioned in our approach. 

Additionally, comparing the iteration-to-iteration changes in the KKT matrices provides valuable insight. In the explicit case, the only varying component is $\Lambda^{-1}S$, while in the implicit case, it is $B_\mu(-v)$. During the final iterations near the optimal solution, the explicit matrix undergoes drastic changes, which explains its reliance on frequent, expensive refactorizations. Conversely, the implicit matrix barely changes as convergence is approached. This stability directly correlates with our previous analysis of the retraction map's evolution: the augmented variable associated with each constraint settles on the appropriate side of the retraction map early on, avoiding abrupt jumps and yielding a very smooth transition of the KKT matrix.

To summarize, our numerical evaluation on the synthetic problem validates two key properties of the matrix associated with our implicit representation: (\textit{P1}) it experiences minimal changes as the iterates approach the optimal solution, and (\textit{P2}) it remains spectrally bounded throughout the solving process.


\begin{figure*}[t]
    \centering
    \input{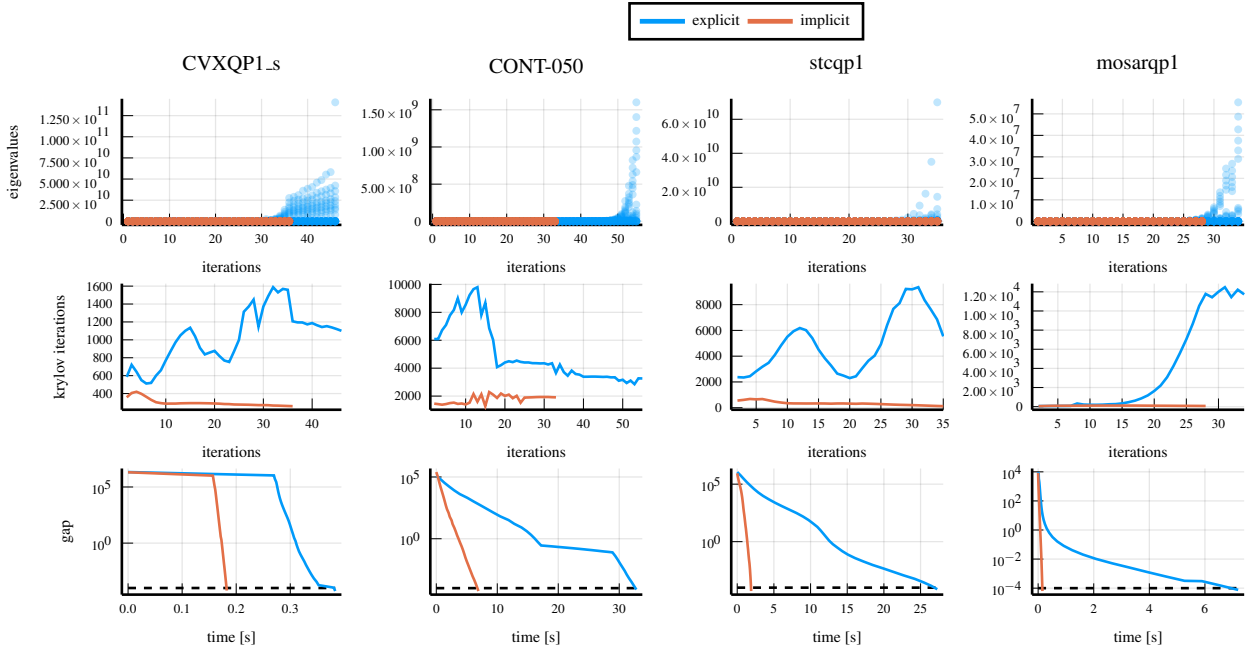}
    \vspace{-6mm}
    \caption{Evaluation of the MINRES iterative solver on the Maros-Meszaros dataset~\cite{maros1999repository} for the explicit (blue) and implicit (red) formulations. \textbf{Top row:} Eigenvalue spectrum across interior-point iterations. \textbf{Middle row:} Number of inner Krylov (MINRES) iterations required per interior-point step. \textbf{Bottom row:} Duality gap reduction versus total solve time. The implicit method's spectral boundedness prevents the iteration explosion and severe slowdowns observed in the explicit method.}
    \vspace{-5mm}
    \label{fig:iterative_methods}
\end{figure*}


\subsection{Reusing factorizations via inexact Newton}

Building on the findings from the previous subsection, we now focus on property (\textit{P1}): the minimal variation of the KKT matrix. By exploiting this gradual evolution, we can reuse factorizations across multiple iterations---effectively employing an inexact Newton method---which ultimately yields significant computational speedups. 

To formalize this inexact Newton strategy, we first compactly denote the partially condensed system in~\eqref{eq:implicit_partially_condensed} as $J(v) \Delta \tilde{z} = -\tilde{r}$, where $\Delta \tilde{z}$ and $\tilde{r}$ are the corresponding condensed state and residual vectors. Let $J^*$ represent the implicit matrix that remains frozen from a previous iterate. We compute an approximate step $\Delta \tilde{z}_k$ by solving the frozen system $J^* \Delta \tilde{z}_k = -\tilde{r}_k$.

To ensure sufficient progress, this approximate step must satisfy the standard inexact Newton condition $\| \tilde{r}_k + J_k \Delta \tilde{z}_k \| \leq \theta_k \| \tilde{r}_k \|$, where $\theta_k$ is a forcing term that dictates the permissible inexactness (see~\cite[Chapter 11]{nocedal2006numerical}). By substituting $\tilde{r}_k = -J^* \Delta \tilde{z}_k$ into the left-hand side and applying the structure of our matrix from~\eqref{eq:implicit_partially_condensed_softrelu}, this requirement reduces to:
\begin{equation}\label{eq:inexact_newton}
    \|\left(B_{\mu}(-v^*)-B_{\mu}(-v_k)\right)\Delta v_k\|\ \leq \theta_k \|\tilde{r}_{2,k}\|\,,
\end{equation}
where $\tilde{r}_{2,k}$ is the second component of $\tilde{r}_{k}$. We evaluate this condition at each step to determine whether to compute a new factorization or reuse the frozen one. To demonstrate the computational benefits of this approach, we divide our analysis into two parts.

First, we revisit the synthetic problem defined in~\eqref{eq:synthetic_qp}. Its simplicity provides a clear illustration of how and when factorizations can be reused. The corresponding results are shown in the first column of Fig.~\ref{fig:inexact_newton}. Specifically, before solving the linear system at each iteration, we evaluate the inexact Newton condition in~\eqref{eq:inexact_newton}. If the condition is satisfied, we bypass the expensive factorization step and reuse the frozen matrix. The top-row plot displays the optimization iterates in blue, highlighting in red the specific iterations where a new factorization was explicitly computed---notably, this expensive step was only required in 4 out of the 18 total iterations. The bottom-row plot zooms in on the final iterates, illustrating the regions where the frozen factorizations remain valid for the last three updates.

Second, we extend our analysis to problems from the Maros-Meszaros dataset to demonstrate how the benefits of reusing factorizations generalize to practical, real-world scenarios. These results are presented in the second through fourth columns of Fig.~\ref{fig:inexact_newton}. The first row illustrates the duality gap reduction with respect to total solve time, while the second row plots the total number of factorizations against various constant forcing terms $\theta$. While developing a sophisticated, adaptive strategy for the forcing term is beyond the scope of this work (see~\cite[Chapter 11]{nocedal2006numerical}), these preliminary results clearly indicate that our implicit formulation successfully enables factorization reuse. This capability contributes to substantial computational speedups, a distinct advantage that is unattainable with the standard explicit representation.

\subsection{Factorization-free iterative methods}\label{subsec:exp_factor_free}

We now shift our focus to property (\textit{P2}): the spectral boundedness of the implicit matrix. This characteristic is a critical enabler for factorization-free iterative solvers, which are notoriously sensitive to ill-conditioned systems and typically require expensive preconditioners~\cite{gondzio2012matrix}. Because our implicit formulation is spectrally bounded by construction, it naturally overcomes this limitation. 

To validate this claim, we evaluate the MINRES algorithm on the explicit system in~\eqref{eq:explicit_partially_condensed} and our implicit system in~\eqref{eq:implicit_partially_condensed} across various problems from the Maros-Meszaros dataset. For a fair comparison, we precondition both implementations with a diagonal block-Jacobi method~\cite{anzt2019adaptive}. Though we only report results for the block-Jacobi method, more sophisticated Schur complement-based preconditioners yielded comparable outcomes. Both the relative and absolute tolerances for the iterative solver were set to $10^{-10}$.%

The results are presented in Fig.~\ref{fig:iterative_methods}. The top row illustrates the eigenvalue spectrum, the middle row displays the number of Krylov iterations, and the bottom row plots the evolution of the duality gap with respect to total solve time. As expected, the eigenvalue spectrum of the explicit representation diverges as the interior-point iterations progress, whereas the spectrum of the implicit representation remains strictly bounded. This aligns with our theoretical expectations and generalizes the observations from Fig.~\ref{fig:fundamentals} to practical, real-world problems.

The practical implications of this spectral boundedness are evident in the middle and bottom rows. The number of Krylov iterations (i.e., the internal iterations within MINRES) is consistently higher for the explicit formulation than for the implicit one. In fact, in all cases except for CONT-050, the number of required MINRES iterations grows as the explicit matrix becomes increasingly ill-conditioned. Conversely, for the implicit case, the number of Krylov iterations remains remarkably flat and even exhibits a slight reduction as convergence is approached. Because the explicit method requires an increasing number of internal Krylov iterations per outer step, its total solve times are considerably longer than those of the implicit implementation, as shown in the bottom row.

\subsection{Machine tolerance at any floating-point precision}

To conclude our numerical validation, we highlight an additional benefit of property \textit{(P2)}: a spectrally bounded KKT matrix enables high-accuracy solutions even on lower-precision hardware. We demonstrate this by comparing the explicit and implicit formulations when solving the linear system---the most computationally demanding step of the algorithm---using both double (float64) and single (float32) precision. In these tests, we bypass standard convergence criteria and allow the algorithms to run until they stall due to numerical limits. Note that the line search is consistently evaluated in double precision for both methods.

The results for two representative problems from the Maros-Meszaros dataset are shown in Fig.~\ref{fig:precision}. Under float64, both the explicit and implicit methods reach a machine precision of approximately $10^{-15}$. However, when restricted to float32, the explicit method stagnates near $10^{-5}$, whereas our implicit formulation successfully achieves the same $10^{-15}$ tolerance. This stark contrast illustrates how a spectrally bounded linear system can decouple a method's ultimate accuracy from the precision of the linear solver. Ultimately, this demonstrates the significant potential of the implicit representation to deploy high-precision interior-point methods on hardware optimized for single-precision arithmetic.

 \begin{figure}[t]
	\centering
    \input{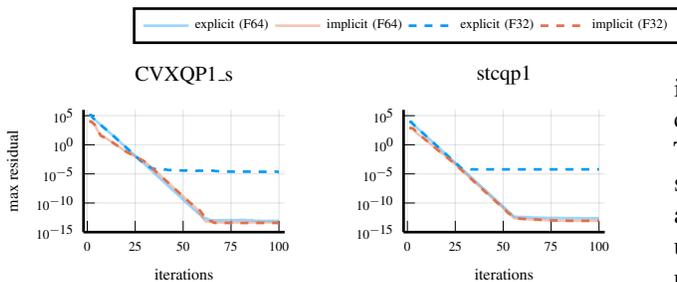}
    \vspace{-8mm}
    \caption{Evolution of the maximum KKT residuals (see eq.~\eqref{eq:residuals}) for the explicit (blue) and implicit (red) formulations under float64 (solid) and float32 (dashed) solver precisions. The spectrally bounded implicit system successfully achieves high accuracy despite low-precision arithmetic.}\label{fig:precision}
    \vspace{-5mm}
\end{figure}

\section{Discussion and Future Work}\label{sec:future}
The implicit method presented in this work offers a scalable framework for modern quadratic programming. By removing the eigenvalue divergence and ill-conditioning that affect standard interior-point methods, it naturally enables inexact Newton steps with reusable factorizations, factorization-free iterative solvers, and low-precision arithmetic, while retaining the rapid convergence and high accuracy that make interior-point methods attractive.

The opportunities highlighted above should be viewed as illustrative rather than exhaustive. In our view, they point to a broader set of possibilities that remains largely unexplored. On the theoretical side, important next steps include incorporating predictor-corrector strategies to reduce the number of interior-point iterations and extending the framework to other cones, such as the positive semi-definite (PSD) cone, where the shortcomings of standard interior-point methods are often even more pronounced. On the practical side, realizing the full potential of this approach will require a carefully engineered software implementation capable of exploiting modern accelerated and highly parallel hardware.

Taken together, these extensions and the distinctive properties of the implicit formulation—factorization-free linear solves, reuse of factorizations across iterations, and robustness to low-precision arithmetic—suggest that this line of work holds strong promise for bringing interior-point methods into the modern era of massive-scale optimization.
\section{Conclusions}\label{sec:conclusion}
We have presented a novel approach for primal-dual interior-point methods in quadratic programming, where complementarity is guaranteed by construction, i.e., implicitly. The underlying linear system is spectrally bounded and structurally well-behaved, and thereby, it evolves smoothly across iterations. This opens new opportunities for solving the underlying linear system, whether by reusing factorizations, utilizing factorization-free methods, or achieving high accuracy even with low-precision arithmetic. For these reasons, we believe this implicit representation might be the key to bringing the benefits of interior-point methods into large-scale optimization problems.
\printbibliography{}

\end{document}